\documentclass[a4paper,10pt]{article}
\usepackage{amsfonts}
\usepackage{amssymb,amsthm}
\usepackage{amsmath,comment}

\newcommand{\R}{\mathbb{R}}
\newcommand{\N}{\mathbb{N}}

\newcommand{\T}{\mathcal{T}}

\newcommand{\Pa}{\mathcal{P}}

\newtheorem{theorem}{Theorem}
\newtheorem{lemma}[theorem]{Lemma}
\newtheorem{proposition}[theorem]{Proposition}
\newtheorem{corollary}[theorem]{Corollary}

\begin{document}
\title{On the minimization of Dirichlet eigenvalues}
\author{ { M. van den Berg \thanks{Partially supported by the London Mathematical Society, Grant 41217 and by The Leverhulme Trust,
International Network Grant \emph{Laplacians, Random Walks, Bose
Gas, Quantum Spin Systems}. The author wishes to thank Brian
Davies and Dorin Bucur for helpful
discussions.}} \\
%EndAName
School of Mathematics, University of Bristol\\
University Walk, Bristol BS8 1TW\\
United Kingdom\\
\texttt{mamvdb@bristol.ac.uk}}
\date{14 October 2014}\maketitle

\vskip 3truecm \indent

\begin{abstract}\noindent Results are obtained for two
minimization problems: \begin{equation*}I_k(c)=\inf
\{\lambda_k(\Omega): \Omega\ \textup{open, convex in}\ \R^m,\
\T(\Omega)= c \},\end{equation*} and
\begin{equation*}J_k(c)=\inf\{\lambda_k(\Omega):
\Omega\ \textup{quasi-open in}\ \R^m, |\Omega|\le 1, \mathcal
P(\Omega)\le c \},\end{equation*} where $c>0$, $\lambda_k(\Omega)$
is the $k$'th eigenvalue of the Dirichlet Laplacian acting in
$L^2(\Omega)$, $|\Omega|$ denotes the Lebesgue measure of
$\Omega$, $\mathcal P(\Omega)$ denotes the perimeter of $\Omega$,
and where $\T$ is in a suitable collection of functions. The
latter includes the perimeter of $\Omega$ and the moment of
inertia of $\Omega$ with respect to its centre of mass.
\end{abstract}

%\vskip 3truecm \indent
 \textbf{Keywords}: Dirichlet eigenvalues; Convexity constraint; Perimeter; Lebesgue measure; Moment of inertia\\

 \textbf{2010 Mathematics Subject Classification:} 49Q10, 49R05, 35J25, 35P15\\

\mbox{}\newpage

\section{Introduction\label{sec1}}
Let $\Omega$ be an open set in Euclidean space $\R^m \; (
m=2,3,\cdots)$, with boundary $\partial \Omega$, and let
$-\Delta_{\Omega}$ be the Dirichlet Laplacian acting in
$L^2(\Omega)$. It is well known that if $\Omega$ has finite
Lebesgue measure $|\Omega|$ then $-\Delta_{\Omega}$ has compact
resolvent, and the spectrum of $-\Delta_{\Omega}$ is discrete and
consists of eigenvalues $\lambda_1(\Omega)\le\lambda_2(\Omega)\le
\cdots$ with $\lambda_j(\Omega)\rightarrow\infty$ as
$j\rightarrow\infty$. The Faber-Krahn inequality (Theorem 3.2.1 in
\cite{H}) asserts that if $c>0$ then
\begin{equation*}%\label{e1}
\inf \{\lambda_1(\Omega) :\Omega\ \textup{open in}\ \R^m ,\
|\Omega| = c \}
\end{equation*}
is attained for a ball with Lebesgue measure $c$. The Krahn-Szeg\"o
inequality (Theorem 4.1.1 in \cite{H}) asserts that if $c>0$ then
\begin{equation*}%\label{e2}
\inf \{\lambda_2(\Omega) :\Omega\ \textup{open in}\ \R^m ,\
|\Omega| = c \}
\end{equation*}
is attained for two disjoint balls each with Lebesgue measure $c/2.$ For
higher Dirichlet eigenvalues ($k>2$) it is not known whether the
variational problem

\begin{equation}\label{e2a}\inf \{\lambda_k(\Omega) :\Omega\
\textup{open in}\ \R^m ,\ |\Omega| = c \}
 \end{equation}
 has a
minimizer. However, it has been shown that if the collection of
open sets in \eqref{e2a} is enlarged to the quasi-open sets then
the variational problem
 \begin{equation}\label{e3}
M_k(c)=\inf \{\lambda_k(\Omega) :\Omega\ \textup{quasi-open in}\
\R^m ,\ |\Omega| = c \}
\end{equation} has a bounded minimizer \cite{B,MP} with finite perimeter
\cite{B}. Even though the class of quasi-open sets with measure
$c$ is much larger than the class of open sets with measure $c,$
the infima under \eqref{e2a} and \eqref{e3} are equal.

Few facts are known about these minimizers. E. Oudet has shown
that the ball is not a minimizer of \eqref{e2a} for $k=3, m=3$.
Furthermore the disc is a local minimum of the functional under
\eqref{e2a} for $k=3,m=2$ \cite{H}, and any minimizer of
\eqref{e2a} or \eqref{e3} for $m=2, k=3$, $m=3, k=3$, and $m=3,
k=4$ is connected \cite{WK,vdBI}. An upper bound for the number of
components of a minimizer of \eqref{e2a} (or \eqref{e3}) has been
obtained in Theorems 1 and 2 of \cite{vdBI} in terms of $k$ and
$m$.

Minimization problems for Dirichlet eigenvalues with other
constraints such as torsional rigidity or perimeter have been
investigated in \cite{K1,K2} and \cite{BH2,DPV} respectively. In
\cite {DPV} it was shown that if $m=2,3,\cdots, k\in \N$, and if
$\Pa(\Omega)$ denotes the perimeter of $\Omega$ then
\begin{equation}\label{e4}
P_k(c)=\inf \{\lambda_k(\Omega) :\Omega\ \textup{open in}\ \R^m ,\
\Pa(\Omega) = c,\ |\Omega|<\infty \}
\end{equation}
has a minimizer with a regular boundary, and that any minimizer is
connected. The situation is very simple for $m=2$: taking the
convex envelope of a component of a planar open set decreases both
its perimeter and all of its Dirichlet eigenvalues. It follows
that if $m=2$ then any minimizer is convex and has diameter
bounded by $c/2$. See, for example, Theorem 4 in \cite{vdBI}.
Further progress was made by Bucur and Freitas \cite{BF} who
proved that if $m=2,$ and if $(\Omega_k^*)_{k\in \N}$ is a
sequence of minimizers of \eqref{e4} for $k\in \N$ respectively
then there exists a sequence of translates of these minimizers
again denoted by $(\Omega_k^*)_{k\in \N}$ such that
$\Omega_k^*\rightarrow \frac{c}{2\pi}D$ as $k\rightarrow \infty,$
where $\frac{c}{2\pi}D$ is a homothety of a disc $D$ with radius
$1$ by a factor $\frac{c}{2\pi}$. The convergence is with respect
to the Hausdorff metric. As these authors point out in \cite{BF} it
is not known whether the minimizers of \eqref{e4} are convex for
$m>2$ or whether their diameters are bounded uniformly and
independently of $k$, see \cite{B,MP}.

In this paper we consider a class of constraints, which includes
perimeter and moment of inertia, under the additional constraint
of convexity. Let
\begin{equation}\label{e5}
I_k(c)=\inf \{\lambda_k(\Omega) :\Omega\ \textup{open, convex in}\
\R^m ,\ \T(\Omega) = c \},
\end{equation}
where $\T$ satisfies the following hypotheses.
\begin{enumerate}
\item[(a)]$\T$ is a set function defined on the open,
convex sets in $\R^m$ which is (i) invariant under isometries,
(ii) monotone, i.e. $\Omega_1,\Omega_2$ convex with
$\Omega_1\subset\Omega_2$ implies $\T(\Omega_1)\le \T(\Omega_2)$,
(iii) non-negative and $\T(\Omega)=0$ if and only if
$\Omega=\emptyset$.
\item[(b)]There exists $\tau>0$ such that if $\alpha>0$, and if $\Omega$ is open and convex
then $\T(\alpha\Omega)=\alpha^{\tau}\T(\Omega)$.
\item[(c1)]$T^*$ defined by
\begin{equation}\label{e6}T^*=\inf \{\T(\Omega) :\Omega\ \textup{open, convex in}\ \R^m
,\ |\Omega|=1\}
\end{equation} is strictly positive.
\item[(c2)]
There exists an open, convex set $D$ with $|D|=1$ which is unique
up to isometries such that
\begin{equation}\label{e6a}
\T(D)=T^*.
\end{equation}
\item[(d)]There exist constants $K<\infty$ and $t>1/{\tau}$ such that if  $\Omega$ is open, bounded and convex
then \begin{equation}\label{e7} \textup{diam}(\Omega)\le
K\T(\Omega)^t|\Omega|^{(1-t\tau)/m}.
\end{equation}
\end{enumerate}

We remark that (a) and (c2) imply (c1). Our first result is the
following.
\begin{theorem}\label{the2} Let $m=2,3,\cdots$ and let
$k=1,2,\cdots$.
\begin{enumerate}
\item[\textup{(i)}]If $\T$ satisfies \textup{(a), (b) and (c1)} then variational problem \eqref{e5} has a
minimizer.
\item[\textup{(ii)}]If $\T$ satisfies \textup{(a), (b), (c2)} and $\textup{(d)},$ and if $(\Omega_k^*)_{k\in \N}$ are minimizers of \eqref{e5} for $k\in \N$ respectively then
there exists a sequence of isometries of these minimizers again
denoted by $(\Omega_k^*)_{k\in \N}$ such that
\begin{equation}\label{e6b}
\Omega_k^*\rightarrow \left(\frac{c}{\T(D)}\right)^{1/{\tau}}D,
\end{equation}
where the convergence is with respect to both the Hausdorff metric and
the complementary Hausdorff metric.
\end{enumerate}
\end{theorem}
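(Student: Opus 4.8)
The plan is to settle (i) by the direct method in the class of convex bodies, and (ii) by matching the leading Weyl asymptotics of the minimizers against those of the conjectured limit.

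For (i), I would first record the homogeneity $I_k(c)=c^{-2/\tau}I_k(1)$ coming from $\lambda_k(\alpha\Omega)=\alpha^{-2}\lambda_k(\Omega)$ and (b), so it suffices to exhibit a minimizer for one value of $c$. The key consequence of (c1) is the inequality $\T(\Omega)\ge T^*|\Omega|^{\tau/m}$ for every open convex $\Omega$ (scale $\Omega$ to unit volume and combine homogeneity with (c1)); hence the constraint $\T(\Omega)=c$ forces $|\Omega|\le (c/T^*)^{m/\tau}$, so every competitor has uniformly bounded volume. Given a minimizing sequence $(\Omega_n)$, the bound $\lambda_1(\Omega_n)\le\lambda_k(\Omega_n)\to I_k(c)$ and the inclusion of $\Omega_n$ in a slab of its minimal width $w_n$ give $\pi^2/w_n^2\le\lambda_1(\Omega_n)\le M$, so $w_n$, and hence (by Steinhagen's inequality, using convexity) the inradius, is bounded below. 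I would then combine this with the volume bound: the convex hull of an inscribed ball of radius $\rho$ and a diameter realising pair of points contains a cone of volume comparable to $\rho^{m-1}\,\textup{diam}(\Omega_n)$, so $\textup{diam}(\Omega_n)$ is bounded above. After translation the $\Omega_n$ lie in a fixed ball with inradius bounded below and diameter bounded above, so by the Blaschke selection theorem a subsequence converges in the Hausdorff metric to a convex body $\Omega^*$ with nonempty interior.

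To finish (i) I would check admissibility and optimality of $\Omega^*$. Continuity of $\T$ follows from (a) and (b): Hausdorff convergence together with the lower bound on the inradius yields inclusions $(1-\eta)\Omega^*\subset\Omega_n\subset(1+\eta)\Omega^*$ (after centring at incenters) with $\eta\to0$, whence monotonicity and homogeneity squeeze $\T(\Omega_n)$ between $(1\mp\eta)^{\tau}\T(\Omega^*)$ and give $\T(\Omega^*)=c$. Since nondegenerate convex domains converging in the Hausdorff metric converge spectrally, $\lambda_k(\Omega^*)=\lim_n\lambda_k(\Omega_n)=I_k(c)$, so $\Omega^*$ is a minimizer.

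For (ii), note that (a) and (c2) imply (c1), so part (i) already guarantees that the minimizers $\Omega_k^*$ exist. Set $D_c=(c/\T(D))^{1/\tau}D$; then $\T(D_c)=c$ and $|D_c|=(c/T^*)^{m/\tau}=:V_{\max}$, and $\T(\Omega)\ge T^*|\Omega|^{\tau/m}$ shows $V_{\max}$ is the largest volume attainable under the constraint, with equality characterising $D_c$ uniquely up to isometry by (c2). Using $D_c$ as a competitor gives $I_k(c)=\lambda_k(\Omega_k^*)\le\lambda_k(D_c)$, which by Weyl's law behaves like $C_m(k/V_{\max})^{2/m}$. Conversely, the Berezin--Li--Yau inequality yields $\lambda_k(\Omega_k^*)\ge\frac{m}{m+2}C_m(k/|\Omega_k^*|)^{2/m}$; comparing the two bounds for large $k$ forces $|\Omega_k^*|$ to be bounded below by a fixed positive multiple of $V_{\max}$. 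Feeding this into (d), whose exponent $(1-t\tau)/m$ is negative because $t>1/\tau$, bounds $\textup{diam}(\Omega_k^*)$ above; a volume bounded below together with a bounded diameter also keeps the inradius bounded below, so after isometries the family $(\Omega_k^*)$ is precompact in the Hausdorff metric.

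It remains to identify every Hausdorff subsequential limit $\Omega_\infty$. As in (i), continuity of $\T$ and of Lebesgue measure give $\T(\Omega_\infty)=c$ and $|\Omega_\infty|=\lim|\Omega_k^*|$. For the decisive step I would use domain monotonicity rather than the non-sharp Li--Yau constant: Hausdorff convergence and the lower bound on the inradius give $\Omega_k^*\subset(1+\epsilon)\Omega_\infty$ for large $k$, whence $\lambda_k(\Omega_k^*)\ge(1+\epsilon)^{-2}\lambda_k(\Omega_\infty)$; dividing the sandwich $(1+\epsilon)^{-2}\lambda_k(\Omega_\infty)\le\lambda_k(\Omega_k^*)\le\lambda_k(D_c)$ by $k^{2/m}$, applying Weyl's law to the two fixed domains $\Omega_\infty$ and $D_c$, and letting $\epsilon\to0$ yields $|\Omega_\infty|^{-2/m}\le V_{\max}^{-2/m}$, i.e. $|\Omega_\infty|\ge V_{\max}$. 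Since the constraint forces $|\Omega_\infty|\le V_{\max}$, equality holds, so $\Omega_\infty$ attains the maximal volume and, by the uniqueness in (c2), is an isometric copy of $D_c$. As every subsequential limit equals $D_c$ up to isometry and the family is precompact, one may choose isometric representatives of the $\Omega_k^*$ converging to $D_c$; for nondegenerate convex bodies Hausdorff convergence also yields convergence of the complements, giving convergence in the complementary Hausdorff metric. The main obstacle is precisely this last identification: the Li--Yau bound has the wrong constant and only supplies compactness, so extracting the \emph{sharp} Weyl constant, and hence the exact value $V_{\max}$, requires the monotonicity and scaling comparison against the fixed limit domain, which is what pins the limiting shape down to $D_c$.
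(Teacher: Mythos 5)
Your proposal is correct and follows essentially the same route as the paper: for (i) the direct method with the volume bound $\T(\Omega)\ge T^*|\Omega|^{\tau/m}$ from (c1), an inradius lower bound from $\lambda_1$, the convexity cone argument for the diameter, Blaschke selection, and continuity of $\T$ and of the spectrum under Hausdorff convergence; for (ii) the Li--Yau/Weyl volume lower bound, the diameter bound from (d), and identification of every subsequential limit by domain monotonicity plus Weyl asymptotics forcing maximal volume, hence an isometry of $\left(c/\T(D)\right)^{1/\tau}D$ by the uniqueness in (c2). The only cosmetic differences are that you get the inradius bound via minimal width and Steinhagen's inequality where the paper quotes Davies's bound $\lambda_1(\Omega)\ge(2\rho(\Omega))^{-2}$ for convex sets, and you compare against the dilate $(1+\epsilon)\Omega_\infty$ where the paper uses the $\epsilon$-neighbourhood ${\Omega^*}^{\epsilon}$.
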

In \cite{HO} the authors study variational problem \eqref{e5} in the case where
$\T$ is Lebesgue measure, and obtain properties of
minimizers. Here we note that the Lebesgue measure constraint
satisfies (a), (b) and (c1). Theorem \ref{the2}(i) confirms the
existence of a minimizer in that case. However, this constraint
does not satisfy (c2) nor does it satisfy (d). So we do not obtain
any information about the asymptotic behaviour of these minimizers
for large $k$.

We remark that if $\T_1$ and $\T_2$ are constraints which satisfy
(a), (b) and (d) with constants $\tau_1, t_1, K_1$ and $\tau_2,
t_2, K_2$ respectively and if there exists a convex set $D$ such that (c2) holds for both $\T_1$ and $\T_2$ then $\T_1\T_2$ defined by
$(\T_1\T_2)\Omega=\T_1(\Omega)\T_2(\Omega)$ satisfies (a), (b)
with $\tau = \tau_1 + \tau_2$, (d) with $t=\frac{t_1t_2}{t_1+t_2}$
and $K=\max\{K_1,K_2\}$, and (c2) with $D$.

We defer the proof of Theorem \ref{the2} to
Section \ref{sec2}. There we also present and prove some of its corollaries.

Our second result is an interpolation between the minimization of
the $k$'th eigenvalue with a Lebesgue measure constraint, and of
the $k$'th eigenvalue with a perimeter constraint. Since existence
of a minimizer of the former has been shown for quasi-open sets,
we define
\begin{equation}\label{b1}
J_k(c)=\inf\{\lambda_k(\Omega): \Omega\ \textup{quasi-open in}\
\R^m, |\Omega|\le 1, \mathcal P(\Omega)\le c \}.
\end{equation}
We denote by $\mathfrak{M}_k$ the collection of minimizers of
$M_k(1)$, and by $\mathfrak{P}_k$ the collection of minimizers of
$P_k(1)$ respectively. These collections are non-empty by the
results of \cite{B,MP} and \cite{DPV} respectively. Let
\begin{equation*}%\label{b2}
\pi_k=\inf\{|\Omega|:\Omega \in \ \mathfrak{P}_k\},
\end{equation*}
and
\begin{equation*}%\label{b3}
\mu_k=\inf\{\Pa(\Omega):\Omega \in \ \mathfrak{M}_k\}.
\end{equation*}
We also denote by $\omega_m$ the Lebesgue measure of the ball in $\R^m$ with radius $1$.
\begin{theorem}\label{the3} Let $m=2,3,\cdots$, and let $k=1,2,\cdots$.
\begin{enumerate}
\item[\textup{(i)}]$c\mapsto J_k(c)$ is monotonically decreasing, continuous on
$\R^+$, and
\begin{equation}\label{b4}
c_1^{2/(m-1)}J_k(c_1)\le c_2^{2/(m-1)}J_k(c_2),\ \ 0<c_1\le
c_2<\infty.
\end{equation}
\item[\textup{(ii)}] If $ c>\mu_k$ then there exists $\Omega^*\in\mathfrak{M}_k$ which is a minimizer of \eqref{b1}.
If $ c\ge\mu_k$ then
\begin{equation*}%\label{b5}
J_k(c)=M_k(1).
\end{equation*}
\item[\textup{(iii)}] If $\ 0<c<\pi_k^{-(m-1)/m}$ then there exists $\Omega_*\in\mathfrak{P}_k$ such that $c^{1/(m-1)}\Omega_*$ is a minimizer of \eqref{b1}.
If $\ 0<c\le\pi_k^{-(m-1)/m}$ then
\begin{equation*}%\label{b6}
J_k(c)=c^{-2/(m-1)}P_k(1).
\end{equation*}
\item[\textup{(iv)}]\begin{equation*}%\label{b7}
\mu_k\ge m\omega_m^{1/m}.
\end{equation*}
\item[\textup{(v)}]\begin{equation*}%\label{b8}
\pi_k\le
m^{-m/(m-1)}\omega_m^{-1/(m-1)}.
\end{equation*}
\item[\textup{(vi)}]\begin{equation*}%\label{b9}
\pi_k\ge (2m)^{-m/(m-1)}(m+2)^{-m/2}\omega_m^{-1}.
\end{equation*}
\end{enumerate}
\end{theorem}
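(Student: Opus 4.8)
The whole theorem is driven by the three scaling identities $\lambda_k(t\Omega)=t^{-2}\lambda_k(\Omega)$, $|t\Omega|=t^m|\Omega|$ and $\mathcal P(t\Omega)=t^{m-1}\mathcal P(\Omega)$ for $t>0$, together with the isoperimetric inequality and the Li--Yau bound. I begin with (i). Monotonicity of $J_k$ is immediate, since increasing $c$ only relaxes the constraint $\mathcal P(\Omega)\le c$. For \eqref{b4} I would take $\Omega$ admissible for $J_k(c_2)$ and rescale by $t=(c_1/c_2)^{1/(m-1)}\le1$: then $\mathcal P(t\Omega)\le c_1$ and $|t\Omega|\le t^m\le1$, so $t\Omega$ is admissible for $J_k(c_1)$, while $\lambda_k(t\Omega)=(c_2/c_1)^{2/(m-1)}\lambda_k(\Omega)$; taking the infimum over $\Omega$ gives \eqref{b4}. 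Finally $J_k$ is non-increasing whereas, by \eqref{b4}, $c\mapsto c^{2/(m-1)}J_k(c)$ is non-decreasing; at any $c_0$ the first fact gives $J_k(c_0^-)\ge J_k(c_0^+)$ and the second (after dividing by the continuous positive factor $c_0^{2/(m-1)}$) gives $J_k(c_0^-)\le J_k(c_0^+)$, so $J_k$ has no jump and is continuous.

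Parts (iv) and (v) follow directly from the isoperimetric inequality: a ball of measure $1$ has perimeter $m\omega_m^{1/m}$, so every $\Omega\in\mathfrak M_k$ (with $|\Omega|=1$) satisfies $\mathcal P(\Omega)\ge m\omega_m^{1/m}$, and a ball of perimeter $1$ has measure $m^{-m/(m-1)}\omega_m^{-1/(m-1)}$, so every $\Omega\in\mathfrak P_k$ (with $\mathcal P(\Omega)=1$) has $|\Omega|\le m^{-m/(m-1)}\omega_m^{-1/(m-1)}$, whence the infimum bound. For (ii), rescaling any admissible set up to measure $1$ lowers $\lambda_k$, so $J_k(c)\ge M_k(1)$ for all $c$; and if $c>\mu_k$ I pick $\Omega^*\in\mathfrak M_k$ with $\mathcal P(\Omega^*)<c$, which is then admissible for \eqref{b1} with $\lambda_k(\Omega^*)=M_k(1)$, giving a minimizer and $J_k(c)=M_k(1)$, while $c=\mu_k$ follows from the continuity in (i). For (iii), when $0<c<\pi_k^{-(m-1)/m}$ one has $c^{-m/(m-1)}>\pi_k$, so there is $\Omega_*\in\mathfrak P_k$ with $|\Omega_*|<c^{-m/(m-1)}$; then $c^{1/(m-1)}\Omega_*$ has perimeter $c$, measure $<1$ and $\lambda_k=c^{-2/(m-1)}P_k(1)$, proving $J_k(c)\le c^{-2/(m-1)}P_k(1)$. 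For the reverse inequality I rescale any admissible $\Omega$ to perimeter $1$, which multiplies $\lambda_k$ by $\mathcal P(\Omega)^{2/(m-1)}\le c^{2/(m-1)}$; the one point to record here is that the perimeter-constrained infimum over quasi-open sets coincides with $P_k(1)$ (taken over open sets), which I would justify through the lower semicontinuity/relaxation theory of perimeter, in analogy with the measure-constrained equality noted in the Introduction. The endpoint $c=\pi_k^{-(m-1)/m}$ again follows from continuity.

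Part (vi) is the substantial one. The first step is the relation
\[
\pi_k\ge\left(\frac{M_k(1)}{P_k(1)}\right)^{m/2},
\]
obtained by noting that for $\Omega\in\mathfrak P_k$ the set $|\Omega|^{-1/m}\Omega$ has measure $1$, so $|\Omega|^{2/m}P_k(1)=\lambda_k(|\Omega|^{-1/m}\Omega)\ge M_k(1)$, and then taking the infimum over $\mathfrak P_k$. It remains to bound the ratio below by a $k$-independent constant. For the numerator I would use the Li--Yau inequality, which for any set of measure $1$ gives $M_k(1)\ge\frac{4\pi^2 m}{m+2}\,\omega_m^{-2/m}k^{2/m}$. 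For the denominator I would test with the cube $Q=(0,L)^m$ of perimeter $1$, that is $L=(2m)^{-1/(m-1)}$: its eigenvalues are $\frac{\pi^2}{L^2}\sum_i n_i^2$ with $n_i\ge1$, and since the $k$ points with $1\le n_i\le\lceil k^{1/m}\rceil$ all satisfy $\sum_i n_i^2\le 4mk^{2/m}$, one gets $P_k(1)\le\lambda_k(Q)\le 4\pi^2 m\,(2m)^{2/(m-1)}k^{2/m}$. Forming the quotient, the factors $\pi^2$, $m$, $k^{2/m}$ and $\omega_m^{-2/m}$ combine so that $M_k(1)/P_k(1)\ge(2m)^{-2/(m-1)}(m+2)^{-1}\omega_m^{-2/m}$, and raising to the power $m/2$ reproduces exactly the asserted bound.

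The main obstacle is part (vi), and concretely the choice of competitor for $P_k(1)$: it must have explicitly computable spectral counting and exactly the $k^{2/m}$ growth of the Li--Yau lower bound for $M_k(1)$, so that the transcendental constants ($\pi^2$) and the $k$-dependence cancel in the ratio and leave the purely algebraic expression in $m$ and $\omega_m$; the cube of prescribed perimeter is precisely such a competitor, and pinning down the lattice count $\sum_i n_i^2\le 4mk^{2/m}$ is the key quantitative input. The remaining, secondary technical point is the open versus quasi-open coincidence for the perimeter-constrained infimum used in (iii).
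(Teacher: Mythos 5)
Your proposal is correct and follows essentially the same route as the paper: scaling for (i), direct admissibility of (near-)minimizers for (ii)--(iii), the isoperimetric inequality for (iv)--(v), and Li--Yau tested against a unit-perimeter cube for (vi) --- your detour through $M_k(1)$ is just the paper's Li--Yau step applied to the rescaled minimizer of $P_k(1)$, and your lattice count reproduces the paper's bound $\lambda_k(Q_a)\le 4\pi^2 m\,a^{-2}k^{2/m}$ with $a=(2m)^{-1/(m-1)}$. The open-versus-quasi-open coincidence you flag in (iii) is handled no more explicitly in the paper, which simply asserts $J_k(c)\ge P_k(c)$ (it follows from \cite{DPV}, where the minimizer over the larger class is shown to be open), so your write-up is, if anything, more candid about that step.
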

We do not have a proof of existence of a minimizer of
\eqref{b1} for $\pi_k^{-(m-1)/m}<c\le \mu_k$. However, the proof
of Theorem \ref{the3} does not rely on that existence. We defer
the proof of Theorem \ref{the3} to Section \ref{sec3}.

\section{Proof of Theorem \ref{the2} \label{sec2}}

Throughout we will denote the inradius of a set $A$ by
\begin{equation*}%\label{e13}
\rho(A)=\sup\{\rho>0:x\in A, B(x;\rho)\subset A\},
\end{equation*}
where $B(x;\rho)$ is the open ball with centre $x$ and radius
$\rho$. The following will be used in the proof of Theorem
\ref{the2}.
\begin{lemma}\label{conv}
If $\Omega$ is an open, convex set in $\R^m$ with inradius
$\rho(\Omega)$ and with finite Lebesgue measure $|\Omega|$ then
$\Omega$ is bounded, and
\begin{equation}\label{e15a}
\textup{diam}(\Omega)\le
2m\omega_{m-1}^{-1}\rho(\Omega)^{1-m}|\Omega|.
\end{equation}
If $\Omega$ is an open, convex set in $\R^m$ with finite Lebesgue
measure $|\Omega|,$ then
\begin{equation}\label{e15b}
\rho(\Omega)\ge2^{m-1}(m\omega_m)^{-1}\textup{diam}(\Omega)^{1-m}|\Omega|.
\end{equation}
\end{lemma}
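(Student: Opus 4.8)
The plan is to establish the two inequalities by genuinely different mechanisms: an inscribed-cone volume bound for \eqref{e15a}, and a volume/perimeter/diameter chain for \eqref{e15b}.

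For \eqref{e15a} I would exploit that $\Omega$ contains a large inscribed ball and use convexity to trap an inscribed cone whose volume is controlled. Fix $x_0$ with $B(x_0;\rho)\subseteq\Omega$ where $\rho=\rho(\Omega)$; to justify boundedness first one fixes any $\rho_1\in(0,\rho)$, runs the estimate with $\rho_1$ to get a finite bound, and only then passes to $\rho$. For a point $p\in\overline{\Omega}$, let $D_p$ be the equatorial $(m-1)$-dimensional disc of $B(x_0;\rho)$ lying in the hyperplane through $x_0$ orthogonal to $p-x_0$. Since $D_p\subset\overline{B(x_0;\rho)}\subseteq\overline{\Omega}$ and $p\in\overline{\Omega}$, convexity forces the cone $C_p=\textup{conv}(\{p\}\cup D_p)\subseteq\overline{\Omega}$. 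This cone has base an $(m-1)$-ball of radius $\rho$ and height $|p-x_0|$, so its volume $\tfrac1m\omega_{m-1}\rho^{m-1}|p-x_0|$ is at most $|\Omega|$. Rearranging bounds $|p-x_0|$, and since $\textup{diam}(\Omega)=\sup_{p,q}|p-q|\le 2\sup_p|p-x_0|$, inequality \eqref{e15a} follows; the finiteness of the bound (already with $\rho_1<\rho$) yields boundedness of $\Omega$.

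For \eqref{e15b} the plan is to combine two scale-correct inequalities whose product reproduces exactly the stated constant. The first is the volume–perimeter–inradius bound $|\Omega|\le\rho(\Omega)\,\mathcal{P}(\Omega)$, which I would obtain from the inner parallel bodies $\Omega_{-t}=\{x\in\Omega:\textup{dist}(x,\partial\Omega)\ge t\}$. These are convex, nested, and empty for $t>\rho(\Omega)$; the coarea formula applied to the distance-to-boundary function gives $|\Omega|=\int_0^{\rho(\Omega)}\mathcal{P}(\Omega_{-t})\,dt$, and monotonicity of perimeter under inclusion of convex sets gives $\mathcal{P}(\Omega_{-t})\le\mathcal{P}(\Omega)$, whence $|\Omega|\le\rho(\Omega)\,\mathcal{P}(\Omega)$. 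The second ingredient is $\mathcal{P}(\Omega)\le m\omega_m(\textup{diam}(\Omega)/2)^{m-1}$, i.e.\ the ball of diameter $\textup{diam}(\Omega)$ maximizes surface area among convex bodies of that diameter. I would prove this with Cauchy's projection formula $\mathcal{P}(\Omega)=\omega_{m-1}^{-1}\int_{S^{m-1}}\mathcal{H}^{m-1}(\Omega|u^{\perp})\,d\sigma(u)$, where $\Omega|u^{\perp}$ is the orthogonal projection onto $u^{\perp}$ and $\mathcal{H}^{m-1}$ the $(m-1)$-dimensional Hausdorff measure (the constant $\omega_{m-1}^{-1}$ fixed by testing on a ball). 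Each projection is convex of diameter at most $\textup{diam}(\Omega)$, so the isodiametric inequality in $\R^{m-1}$ gives $\mathcal{H}^{m-1}(\Omega|u^{\perp})\le\omega_{m-1}(\textup{diam}(\Omega)/2)^{m-1}$; integrating over $S^{m-1}$, whose total surface measure is $m\omega_m$, yields the claim. Multiplying the two ingredients gives $|\Omega|\le\rho(\Omega)\,m\omega_m(\textup{diam}(\Omega)/2)^{m-1}$, which rearranges to \eqref{e15b} with constant $2^{m-1}(m\omega_m)^{-1}$.

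The main obstacle is the perimeter–diameter estimate. The naive route—enclosing $\Omega$ in a ball of radius $\textup{diam}(\Omega)/2$ and invoking monotonicity of perimeter—fails, because the circumradius of a diameter-$d$ convex body can exceed $d/2$. The Cauchy-formula route circumvents this by descending to projections, where only the (dimension-matched) isodiametric inequality in $\R^{m-1}$ is needed, and the one subtlety is tracking the normalizing constant so that the two factors combine into precisely $2^{m-1}(m\omega_m)^{-1}$. By comparison the inscribed-cone argument for \eqref{e15a} and the coarea/monotonicity step for $|\Omega|\le\rho(\Omega)\mathcal{P}(\Omega)$ are routine.
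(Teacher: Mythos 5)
Your proof of \eqref{e15a} is essentially the paper's own argument: the cone over an equatorial $(m-1)$-dimensional disc of the inball with vertex at a far point of $\overline{\Omega}$, a volume comparison, and the factor $2$ from centring at the inball; your $\rho_1<\rho$ device is a harmless extra precaution (the paper implicitly assumes an inball of radius exactly $\rho(\Omega)$ exists). For \eqref{e15b} you follow the same two-ingredient skeleton as the paper, namely $|\Omega|\le\rho(\Omega)\Pa(\Omega)$ combined with $\Pa(\Omega)\le m\omega_m\left(\tfrac12\textup{diam}(\Omega)\right)^{m-1}$, but where the paper simply cites Osserman for \eqref{e15c} and Bucur--Buttazzo for \eqref{e15d}, you prove both ingredients: the first by the coarea formula over inner parallel bodies plus monotonicity of perimeter on nested convex sets, the second by Cauchy's projection formula plus the isodiametric inequality in $\R^{m-1}$. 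Notably, your criticism of the ``naive route'' applies verbatim to the paper's own justification: the paper derives \eqref{e15d} by asserting that a convex $\Omega$ is contained in a ball of radius $\tfrac12\textup{diam}(\Omega)$, which is false in general (an equilateral triangle of diameter $d$ has circumradius $d/\sqrt{3}>d/2$; Jung's theorem gives the sharp bound $\sqrt{m/(2m+2)}\,\textup{diam}(\Omega)$). The inequality \eqref{e15d} itself is nevertheless true for convex sets, exactly by your projection argument, so your write-up not only reaches the stated constants correctly but repairs the one weak joint in the paper's proof; the price is a longer, self-contained argument relying on the coarea and Cauchy formulas rather than two citations.
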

\begin{proof}
Let $d$ be any point of the boundary of an open, convex set
$\Omega$ with Lebesgue measure $|\Omega|$, and let $0$ be the
centre of an open ball with radius $\rho(\Omega)$ in $\Omega$. Let
$\Pi$ be the $(m-1)$-dimensional plane through $0$ perpendicular
to the straight line segment $[0,d]$. The disc $\Pi \cap
B(0;\rho(\Omega))$ has $(m-1)$-dimensional Lebesgue measure
$\omega_{m-1}\rho(\Omega)^{m-1}$. By convexity we have that the
cone with base $\Pi \cap B(0;\rho(\Omega))$ and vertex $d$ is
contained in $\Omega$. Since the Lebesgue measure of that cone is
given by $m^{-1}\omega_{m-1}|d|\rho(\Omega)^{m-1},$ we conclude
that
\begin{equation}\label{e15}
|\Omega|\ge m^{-1}\omega_{m-1}|d|\rho(\Omega)^{m-1}.
\end{equation}
This implies that $\Omega$ is bounded. Then by \eqref{e15},
\begin{align*}%\label{e15e}
\textup{diam}(\Omega)&=
\sup\{|d_1-d_2|:d_1\in\partial\Omega,d_2\in\partial\Omega\}\nonumber
\\ &\le 2\sup\{|d|:d\in
\partial\Omega\}\nonumber \\
&\le2m\omega_{m-1}^{-1}\rho(\Omega)^{1-m}|\Omega|.
\end{align*}
This proves \eqref{e15a}.

To prove \eqref{e15b}, we have by \cite{O} that
\begin{equation}\label{e15c}
\rho(\Omega)\ge \Pa(\Omega)^{-1}|\Omega|.
\end{equation}
Since $\Omega$ is convex and contained in a ball with radius
$\frac{1}{2}\textup{diam}(\Omega)$ we have by Proposition 2.4.3
(i) in \cite{BB} that
 \begin{equation}\label{e15d}\Pa(\Omega)\le
m\omega_m\left(\frac{1}{2}\textup{diam}(\Omega)\right)^{m-1}.
\end{equation}
Inequality \eqref{e15b} follows from \eqref{e15c} and \eqref{e15d}.
\end{proof}

Below we obtain estimates for $|\T(A)-\T(B)|$ and
$|\lambda_k(A)-\lambda_k(B)|$ for two bounded convex sets $A$ and
$B$ in terms of their Hausdorff distance $d^H(A,B)$.
\begin{lemma}\label{Haus}
If $A$ and $B$ are two open bounded convex sets in $\R^m$, if $\T$
is a set function satisfying hypotheses \textup{(a)} and
\textup{(b)}, and if $\epsilon:=d^H(A,B)\le\rho(A)/2$ then
\begin{equation}\label{a151}
|\T(A)-\T(B)|\le \frac{2\tau3^{\tau}\epsilon}{\rho(A)}\T(A),
\end{equation}
and
\begin{equation}\label{a152}
|\lambda_k(A)-\lambda_k(B)|\le
\frac{16\epsilon}{\rho(A)}\lambda_k(A).
\end{equation}
\end{lemma}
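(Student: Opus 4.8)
The plan is to reduce both estimates to a single geometric fact: under the hypothesis $\epsilon = d^H(A,B) \le \rho(A)/2$, the body $B$ is sandwiched between two concentric dilates of $A$,
\[
\left(1 - \frac{\epsilon}{\rho}\right) A \subseteq B \subseteq \left(1 + \frac{\epsilon}{\rho}\right)A ,\qquad \rho:=\rho(A),
\]
where the dilations are taken about the centre of a largest inscribed ball of $A$. Once this is in hand, \eqref{a151} follows from hypotheses \textup{(a)} and \textup{(b)}, and \eqref{a152} from the domain monotonicity and the scaling of Dirichlet eigenvalues.

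To set up, I would translate $A$ so that the centre of a largest inscribed ball sits at the origin, giving $B(0;\rho)\subseteq A$; this changes neither $\T$ nor $\lambda_k$, by the isometry invariance in \textup{(a)} and by translation invariance of the Laplacian. I would then pass to support functions $h_A,h_B$. The containment $B(0;\rho)\subseteq A$ gives $h_A(u)\ge\rho$ for every unit vector $u$, while $d^H(A,B)\le\epsilon$ is equivalent to $|h_A(u)-h_B(u)|\le\epsilon$ for all $u$. Combining these yields $(1-\epsilon/\rho)\,h_A(u)\le h_B(u)\le(1+\epsilon/\rho)\,h_A(u)$, and since $h_{\alpha A}=\alpha h_A$ and inclusion of convex bodies is equivalent to the pointwise ordering of support functions, the two-sided inclusion above follows. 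Writing $s=\epsilon/\rho\le1/2$, the set $(1-s)A$ is a genuine nonempty convex set.

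For \eqref{a151} I would apply monotonicity \textup{(a)(ii)} to the inclusion and homogeneity \textup{(b)} to the dilates, obtaining $(1-s)^{\tau}\T(A)\le\T(B)\le(1+s)^{\tau}\T(A)$, whence $|\T(A)-\T(B)|\le\max\{(1+s)^{\tau}-1,\,1-(1-s)^{\tau}\}\,\T(A)$. It then remains to bound this elementary one-variable quantity by $2\tau 3^{\tau}s$ on $0\le s\le1/2$, a routine mean-value estimate handled by treating $\tau\ge1$ and $0<\tau<1$ separately. For \eqref{a152} I would instead use the domain monotonicity of Dirichlet eigenvalues together with the scaling $\lambda_k(\alpha\Omega)=\alpha^{-2}\lambda_k(\Omega)$, which turn the inclusion into $(1+s)^{-2}\lambda_k(A)\le\lambda_k(B)\le(1-s)^{-2}\lambda_k(A)$; bounding $\max\{(1-s)^{-2}-1,\,1-(1+s)^{-2}\}$ by $16s$ for $s\le1/2$ then completes the proof.

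I expect the only genuine content to lie in the sandwiching inclusion. The upper inclusion $B\subseteq(1+s)A$ follows from converting the additive Hausdorff error into a multiplicative one via $h_B\le h_A+\epsilon\le(1+s)h_A$, using $h_A\ge\rho$. The lower inclusion $(1-s)A\subseteq B$ is the more delicate direction: $h_B\ge h_A-\epsilon\ge(1-s)h_A$ again relies on the inscribed-ball bound $h_A\ge\rho$, and the hypothesis $\epsilon\le\rho/2$ is exactly what is needed to keep the factor $1-s$ positive. Everything after the inclusion is a mechanical application of the stated hypotheses and elementary estimates.
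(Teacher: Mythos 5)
Your proof is correct, but it is organized differently from the paper's. Both arguments rest on the same upper inclusion $B\subseteq A^{\epsilon}\subseteq\bigl(1+\epsilon/\rho(A)\bigr)A$ (the paper asserts this neighbourhood inclusion for convex $A$ directly; you derive it from support functions via $h_B\le h_A+\epsilon\le(1+\epsilon/\rho(A))h_A$, using $h_A\ge\rho(A)$). The divergence is in how the reverse direction of each absolute value is handled. The paper never establishes your lower inclusion $(1-\epsilon/\rho(A))A\subseteq B$: instead it swaps the roles of $A$ and $B$, applies the same one-sided inclusion with $B$ as reference body, and then eliminates the unknown quantities $\rho(B)$, $\T(B)$, $\lambda_k(B)$ via $\rho(B)\ge\rho(A)-\epsilon$ and a further application of the first bound; this compounding is visible in \eqref{a154} and \eqref{a157}--\eqref{a158}, and is where the constants $2\tau 3^{\tau}$ and $16$ arise. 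Your two-sided sandwich $(1-s)A\subseteq B\subseteq(1+s)A$, $s=\epsilon/\rho(A)\le 1/2$, taken about the centre of an inball of $A$, makes both directions one-step consequences of monotonicity and scaling relative to $A$ alone, with no role reversal and no control of $\rho(B)$ needed. The price is the support-function verification of the lower inclusion (where $\epsilon\le\rho(A)/2$ keeps $1-s$ positive, as you note) and the elementary inequalities
\begin{equation*}
\max\{(1+s)^{\tau}-1,\,1-(1-s)^{\tau}\}\le 2\tau 3^{\tau}s,\qquad
\max\{(1-s)^{-2}-1,\,1-(1+s)^{-2}\}\le 16s,\qquad 0\le s\le \tfrac12,
\end{equation*}
which you defer but which do hold with room to spare (e.g.\ $(1-s)^{-2}-1=s(2-s)/(1-s)^{2}\le 8s$ for $s\le 1/2$, and the $\T$-bound follows from the mean value theorem in the two cases $\tau\ge 1$, $0<\tau<1$ exactly as you indicate). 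In short, your route is symmetric and arguably cleaner, recovering the stated constants easily; the paper's route avoids support functions and the lower inclusion entirely, at the cost of the role-reversal bookkeeping.
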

\begin{proof}
Define the $\epsilon$-neighbourhood of a set by
$\Omega^{\epsilon}=\{x\in\R^m:
\textup{dist}(x,\Omega)<\epsilon\}$. Then $d^H(A,B)=\epsilon$
implies $B\subset A^{\epsilon}$ and $A\subset B^{\epsilon}$. But
since $A$ is convex $A^{\epsilon}\subset
(1+\frac{\epsilon}{\rho(A)})A$, where the latter homothety is with
respect to the centre of an inball. Then by monotonicity and
scaling we have that for $\epsilon\le \rho(A)/2$,
\begin{align}\label{a153}
\T(B)-\T(A)&\le \T(A^{\epsilon})-\T(A)\nonumber \\ &\le
\T((1+\frac{\epsilon}{\rho(A)})A)-\T(A)\nonumber \\ &
=((1+\frac{\epsilon}{\rho(A)})^{\tau}-1)\T(A)\nonumber \\ &\le
\frac{\tau3^{\tau}\epsilon}{2^{\tau}\rho(A)}\T(A).
\end{align}
Reversing the roles of $A$ and $B$ we obtain by using $\rho(B)\ge
\rho(A)-\epsilon$, and \eqref{a153} that for $\epsilon\le
\rho(A)/2$,
\begin{align}\label{a154}
\T(A)-\T(B)&
\le((1+\frac{\epsilon}{\rho(B)})^{\tau}-1)\T(B)\nonumber \\ &\le
((1+\frac{\epsilon}{\rho(A)-\epsilon})^{\tau}-1)(\T(B)-\T(A)+\T(A))\nonumber \\ &
\le((1+\frac{\epsilon}{\rho(A)-\epsilon})^{\tau}-1)(1+\frac{\epsilon}{\rho(A)})^{\tau}\T(A)\nonumber \\ &\le \frac{2\tau 3^{\tau}\epsilon}{\rho(A)}\T(A).
\end{align}
Inequality \eqref{a151} follows by \eqref{a153} and \eqref{a154}.

To prove \eqref{a152} we use the scaling and monotonicity of the Dirichlet eigenvalues to obtain that
\begin{align}\label{a155}
\lambda_k(A)-\lambda_k(B)&\le \lambda_k(A)-\lambda_k(A^{\epsilon})\nonumber \\ &\le \lambda_k(A)-\lambda_k((1+\frac{\epsilon}{\rho(A)})A)\nonumber \\ &
=(1-(1+\frac{\epsilon}{\rho(A)})^{-2})\lambda_k(A)\nonumber \\ &\le \frac{2\epsilon}{\rho(A)}\lambda_k(A).
\end{align}
Reversing the roles of $A$ and $B$ we obtain that
\begin{equation}\label{a156}
\lambda_k(B)-\lambda_k(A)\le\frac{2\epsilon}{\rho(B)}\lambda_k(B).
\end{equation}
Since $A\subset B^{\epsilon}\subset(1+\frac{\epsilon}{\rho(B)})B$
we have that $(1+\frac{\epsilon}{\rho(B)})^{-1}A\subset B$. Hence
\begin{equation}\label{a157}
\lambda_k(B)\le (1+\frac{\epsilon}{\rho(B)})^{2}\lambda_k(A).
\end{equation}
So by \eqref{a156} and \eqref{a157} we obtain by using $\rho(B)\ge
\rho(A)-\epsilon$ and $\epsilon\le \rho(A)/2$ that
\begin{align}\label{a158}
\lambda_k(B)-\lambda_k(A)&\le\frac{2\epsilon}{\rho(B)}(1+\frac{\epsilon}{\rho(B)})^{2}\lambda_k(A)\nonumber
\\ & \le \frac{16\epsilon}{\rho(A)}\lambda_k(A).
\end{align}
Inequality \eqref{a152} follows from \eqref{a155} and
\eqref{a158}.
\end{proof}

In order to prove Theorem \ref{the2}(i) we let $c>0$, fix $k\in
\N$, and let $(\Omega_{k,n})_{n\in \N}$ be a minimizing sequence
of \eqref{e5}. We first show that $\textup{diam}(\Omega_{k,n})$ is
uniformly bounded in $n$. It follows from \eqref{e6} and
hypothesis (b) that for any convex $\Omega$ with finite Lebesgue
measure
\begin{equation}\label{e8}
T^*\le\frac{\T(\Omega)}{|\Omega|^{\tau/m}}.
\end{equation} Since $\T(\Omega_{k,n})=c$ we have that
\begin{equation}\label{e9}
|\Omega_{k,n}|\le \left(\frac{c}{T^*}\right)^{m/{\tau}}.
\end{equation} By hypothesis (c1), $T^*>0$, and so the left hand
side of \eqref{e9} is uniformly bounded from above in $n$. Hence
the spectrum of the Dirichlet Laplacian acting in
$L^2(\Omega_{k,n})$ is discrete. We may assume without loss of
generality that for all $n\in \N,$ $\lambda_k(\Omega_{k,n})\le
2I_k(c)$. Hence
\begin{equation}\label{e12}
\lambda_1(\Omega_{k,n})\le \lambda_k(\Omega_{k,n})\le 2I_k(c).
\end{equation}
It is well-known that for a convex set $\Omega$ in
$\R^m$ the spectrum of the Dirichlet Laplacian acting in
$L^2(\Omega)$ is bounded from below by $(2\rho(\Omega))^{-2}$, see \cite{EBD}. It
follows that
$\lambda_1(\Omega_{k,n})\ge(2\rho(\Omega_{k,n}))^{-2}$. By
\eqref{e12} we conclude that
\begin{equation}\label{e14}
\rho(\Omega_{k,n})\ge 2^{-3/2}I_k(c)^{-1/2}.
\end{equation}
By \eqref{e9}, \eqref{e14} and \eqref{e15a} we have that
$\textup{diam}(\Omega_{k,n})$ is bounded uniformly in $n$ and
satisfies
\begin{align*}%\label{e16}
\textup{diam}(\Omega_{k,n})\le
\frac{m2^{(3m-1)/2}}{\omega_{m-1}}\left(\frac{c}{T^*}\right)^{m/{\tau}}I_k(c)^{(m-1)/2}.
\end{align*}
Hence there exists a sequence of translates of $(\Omega_{k,n})$,
again denoted by $(\Omega_{k,n})$, contained in a sufficiently
large closed cube $B_k$. Then $(\overline{\Omega_{k,n}})_{n \in
\N}$ is a sequence of compact sets in $B_k$. The collection of
compact subsets of $B_k$ is compact in the Hausdorff metric by
Theorem 2.4.10 in \cite{HP}. Hence there exists a subsequence,
again denoted by $(\overline{\Omega_{k,n}})_{n \in \N}$ such that
$(\overline{\Omega_{k,n}})$ converges in the Hausdorff metric
to a compact set say $K_k$. Then $K_k$ is convex (Section 2.2 in
\cite{HP}), and by \eqref{e14},
\begin{equation*}%\label{e18a}
\rho(K_k)\ge 2^{-3/2}I_k(c)^{-1/2}.
\end{equation*}
We conclude that the interior of $K_k$, denoted by $\Omega_k^*$,
is non-empty. Hence $\Omega_{k,n}$ converges to $\Omega_k^*$ in
the Hausdorff metric. Since $\T(\Omega_{k,n})=c,
\rho(\Omega_{k,n})\ge 2^{-3/2}I_k(c)^{-1/2}$ we have by
\eqref{a151} with $A=\Omega_{k,n}$ and $B=\Omega_k^*$ that
$\T(\Omega_k^*)=c$. Furthermore
\begin{align}\label{a159}
|\lambda_k(\Omega_k^*)-I_k(c)|\le
|\lambda_k(\Omega_{k,n})-I_k(c)|+|\lambda_k(\Omega_{k,n})-\lambda_k(\Omega_k^*)|.
\end{align}
The first term in the right hand side of \eqref{a159} tends to $0$
as $n\rightarrow \infty$, since $(\Omega_{k,n})$ is a minimizing
sequence. To estimate the second term in that right hand side we
use \eqref{a152} with $A=\Omega_{k,n}$ and $B=\Omega_k^*$. Since
$\rho(\Omega_{k,n})\ge 2^{-3/2}I_k(c)^{-1/2}$ and
$\lambda_k(\Omega_{k,n})\le 2I_k(c)$ this term tends to $0$ too as
$n\rightarrow \infty$. Hence $\lambda_k(\Omega_k^*)=I_k(c)$ and
$\Omega_k^*$ is a minimizer. This proves \ref{the2}(i).

Since, by \eqref{e14}, the inradius is uniformly bounded from
below and since all elements of the minimizing sequence are convex
the convergence is also in the complementary Hausdorff metric.

To prove part (ii) of Theorem \ref{the2} we consider the set $D$
defined by \eqref{e6a}, and choose $\alpha_c$ such that
$\T(\alpha_c D)=c$. By scaling we have that
\begin{equation}\label{e10}
\alpha_c=\left(\frac{c}{\T(D)}\right)^{1/{\tau}}.
\end{equation}
Hence
\begin{equation}\label{e11}
I_k(c)\le
\lambda_k(\alpha_cD)=\alpha_c^{-2}\lambda_k(D)=\left(\frac{\T(D)}{c}\right)^{2/{\tau}}\lambda_k(D).
\end{equation}
Furthermore by Corollary 1 in \cite{LY}, we have that for any open
set $\Omega$ in $\R^m$ with finite Lebesgue measure,
\begin{equation}\label{e19}
\lambda_k(\Omega)\ge \frac{mC_m}{m+2}
\left(\frac{k}{|\Omega|}\right)^{2/m},
\end{equation}
where
\begin{equation*}%\label{e20}
C_m=4\pi^2\omega_m^{-2/m}.
\end{equation*}
It follows by \eqref{e19} and \eqref{e11} that
\begin{align}\label{e21}
|\Omega_k^*|&\ge
\left(\frac{mC_m}{m+2}\right)^{m/2}\frac{k}{\lambda_k(\Omega_k^*)^{m/2}}\nonumber
\\ &\ge\left(\frac{mC_m}{m+2}\right)^{m/2}\frac{k}{I_k(c)^{m/2}}\nonumber \\&\ge\left(\frac{mC_m}{m+2}\right)^{m/2}
\left(\frac{c}{\T(D)}\right)^{m/{\tau}}\frac{k}{\lambda_k(D)^{m/2}}.
\end{align}
By Weyl's Theorem (see e.g. Theorem 10.6 in \cite{S}) we have that
\begin{equation}\label{e28}
\lambda_k(D)=C_mk^{2/m}+o(k^{2/m}),\ k\rightarrow \infty.
\end{equation}
We conclude by \eqref{e21} and \eqref{e28} that
\begin{equation}\label{e29}
\liminf_{k\rightarrow
\infty}|\Omega_k^*|\ge\left(\frac{m}{m+2}\right)^{m/2}\left(\frac{c}{\T(D)}\right)^{m/{\tau}}.
\end{equation}
Hence by \eqref{e7} and \eqref{e29} we have that
\begin{equation}\label{e30}
\limsup_{k\rightarrow\infty}\textup{diam}(\Omega_k^*)\le K
\left(\frac{m+2}{m}\right)^{(t\tau-1)/2}c^{1/{\tau}}\T(D)^{(t\tau-1)/{\tau}}.
\end{equation}
Hence there exists a sufficiently large cube $B$ which contains
translates of $\Omega_k^*,k=1,2,\cdots$ again denoted by
$\Omega_k^*,k=1,2,\cdots$. As before $(\overline{\Omega_k^*})_{k
\in \N}$ is a sequence of compact sets in $B$. The collection of
compact subsets of $B$ is compact, in the Hausdorff metric by
Theorem 2.4.10 in \cite{HP}. Hence there exists a convergent
subsequence $(\overline{\Omega_{k_l}^*})_{l\in \N}$ which
converges to a convex, compact set $K$. By \eqref{e15b},\
\eqref{e29} and \eqref{e30} we have that
\begin{align}\label{e25a}
\rho(K)&=\lim_{l\rightarrow
\infty}\rho(\Omega_{k_l}^*)\ge\liminf_{k\rightarrow\infty}\rho(\Omega_k^*)\nonumber
\\ &\ge
2^{m-1}K^{1-m}(m\omega_m)^{-1}\left(\frac{m}{m+2}\right)^{(t\tau(m-1)+1)/2}c^{1/\tau}\T(D)^{(t\tau(1-m)-1)/\tau}.
\end{align}
Then the interior of $K$, denoted by $\Omega^*,$  is non-empty. We
now use \eqref{a151} with $A=\Omega_{k_l}^*$ and $B=\Omega^*$ to
conclude that $\T(\Omega^*)=c$. Let $ \epsilon>0$ be arbitrary.
There exists $l_0\in\N$ such that $l\ge l_0$ implies
$\Omega_{k_l}^*\subset {\Omega^*}^{\epsilon}$. By monotonicity of
Dirichlet eigenvalues $\lambda_{k_l}({\Omega^*}^{\epsilon})\le
\lambda_{k_l}(\Omega_{k_l}^*)\le\lambda_{k_l}(\alpha_cD)$. It
follows that, by applying Weyl's Theorem to both sides of the
inequality above, $|{\Omega^*}^{\epsilon}|\ge
|\alpha_cD|=\alpha_c^m$. Since $\epsilon>0$ was arbitrary we
conclude that $|\Omega^*|\ge |\alpha_c D|=\alpha_c^m$. On the
other hand, by \eqref{e8} and \eqref{e10} we have that
\begin{equation*}%\label{e35}
|\Omega^*|\le \left(\frac{\T(\Omega^*)}{\T(D)}\right)^{m/{\tau}}=\left(\frac{c}{\T(D)}\right)^{m/{\tau}}=\alpha_c^m.
\end{equation*}
By uniqueness of the set $D$ in \eqref{e6} the only set which
satisfies the constraint in \eqref{e5} and has measure
$\alpha_c^m$ is an isometry of $\alpha_cD$. We conclude that the
subsequence $\Omega_{k_l}^*$ converges in the Hausdorff metric to
an isometry of $\alpha_cD$. This concludes the proof of
\eqref{e6b} since the limit is independent of the subsequence.

The convergence also takes place in the complementary Hausdorff
metric since the elements in the sequence are convex, and by
\eqref{e6b}, their inradii are uniformly bounded from below. This
concludes the proof Theorem \ref{the2}(ii).

\begin{corollary}\label{cor3} Let $\Omega_k^*$ be a minimizer of
\begin{equation*}%\label{e36}
I_k(c)=\inf \{\lambda_k(\Omega) :\Omega\ \textup{open, convex in}\
\R^m ,\Pa(\Omega) = c \},
\end{equation*}
then there exists a sequence of translates of $(\Omega_k^*)$,
again denoted by $(\Omega_k^*),$ which converges to the ball with
perimeter $c$ in the Hausdorff metric.
\end{corollary}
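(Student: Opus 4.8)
The plan is to read Corollary \ref{cor3} as the special case $\T=\Pa$ of Theorem \ref{the2}(ii), so the whole task reduces to verifying that the perimeter satisfies hypotheses (a), (b), (c2) and (d), and then identifying the limit in \eqref{e6b}. First I would dispose of the structural hypotheses. Perimeter is invariant under isometries and, for convex bodies $\Omega_1\subset\Omega_2$, one has $\Pa(\Omega_1)\le\Pa(\Omega_2)$ because the nearest-point projection onto $\overline{\Omega_1}$ is a $1$-Lipschitz surjection of $\partial\Omega_2$ onto $\partial\Omega_1$ (cf.\ Proposition 2.4.3 in \cite{BB}); together with positivity this gives (a). Since $\Pa(\alpha\Omega)=\alpha^{m-1}\Pa(\Omega)$, hypothesis (b) holds with $\tau=m-1$. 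The isoperimetric inequality $\Pa(\Omega)\ge m\omega_m^{1/m}|\Omega|^{(m-1)/m}$, with equality up to isometry precisely for balls, yields (c2) with $D$ the ball of unit measure, so that $\T(D)=\Pa(D)=m\omega_m^{1/m}$; this also gives (c1) as noted after hypothesis (d).

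I expect the real work to be hypothesis (d), and specifically the strict exponent condition $t>1/\tau=1/(m-1)$. Combining the inradius estimate $\rho(\Omega)\ge\Pa(\Omega)^{-1}|\Omega|$ from \eqref{e15c} with the diameter bound \eqref{e15a} gives
\begin{equation*}
\textup{diam}(\Omega)\le 2m\omega_{m-1}^{-1}\Pa(\Omega)^{m-1}|\Omega|^{2-m},
\end{equation*}
which is exactly \eqref{e7} with $t=m-1$ and $K=2m\omega_{m-1}^{-1}$. For $m\ge 3$ this already meets $t=m-1>1/(m-1)$. The obstruction is the planar case $m=2$, where $t=m-1=1=1/\tau$ just fails the strict inequality.

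To repair this I would use the isoperimetric inequality to trade a higher power of $\Pa$ against a negative power of $|\Omega|$. Using $|\Omega|^{(m-1)/m}\le (m\omega_m^{1/m})^{-1}\Pa(\Omega)$, for any $t\ge m-1$ one has
\begin{equation*}
\Pa^{m-1}|\Omega|^{2-m}=\Pa^{t}|\Omega|^{(1-t(m-1))/m}\Big(|\Omega|^{(m-1)/m}\Pa^{-1}\Big)^{t-(m-1)}\le (m\omega_m^{1/m})^{m-1-t}\,\Pa^{t}|\Omega|^{(1-t(m-1))/m},
\end{equation*}
so (d) persists for every $t\ge m-1$ after replacing $K$ by $2m\omega_{m-1}^{-1}(m\omega_m^{1/m})^{m-1-t}$. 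In particular for $m=2$ one may take any $t>1$, which secures $t>1/\tau$. Hence $\Pa$ satisfies (a), (b), (c2) and (d) for all $m\ge 2$.

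Finally I would identify the limit. Theorem \ref{the2}(ii) now applies and gives, after suitable isometries, $\Omega_k^*\to\big(\tfrac{c}{\T(D)}\big)^{1/\tau}D=:\alpha_cD$ in the Hausdorff metric. Being a dilate of a ball, $\alpha_cD$ is itself a ball, and by the choice of $\alpha_c$ one has $\Pa(\alpha_cD)=\alpha_c^{\tau}\T(D)=c$; equivalently its radius is $\big(\tfrac{c}{m\omega_m}\big)^{1/(m-1)}$. Thus the limit is precisely the ball of perimeter $c$, which is the assertion of the corollary. The only genuinely non-routine point is the $m=2$ adjustment of hypothesis (d); everything else is a direct substitution into Theorem \ref{the2}.
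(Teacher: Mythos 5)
Your proof is correct, and its skeleton is the same as the paper's: read the corollary as the case $\T=\Pa$ of Theorem \ref{the2}(ii), check hypotheses (a), (b), (c2), (d) with $\tau=m-1$, and identify $D$ as the ball of unit measure. The genuine divergence is in how (d) is verified, and here your version is actually the more careful one. The paper simply cites \cite{GWW} for the sharp bound \eqref{e37}, i.e.\ $t=m-1$ with $K=m^{2-m}\omega_{m-1}^{-1}$, and leaves it at that; but, exactly as you observe, for $m=2$ this gives $t=1=1/\tau$, which fails the strict inequality $t>1/\tau$ required by hypothesis (d), so the paper's verification is literally incomplete in the planar case. Your route repairs this: you first derive a (non-sharp) version of \eqref{e37} internally, by combining \eqref{e15a} with the Osserman inradius bound \eqref{e15c}, and then use the isoperimetric inequality $\Pa(\Omega)\ge m\omega_m^{1/m}|\Omega|^{(m-1)/m}$ to trade volume against perimeter and raise the exponent to any $t\ge m-1$ at the cost of a larger constant $K$ — in particular to some $t>1$ when $m=2$. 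What each approach buys: the paper's citation is shorter and carries the sharp constant (sharp for degenerating double cones, as the paper notes), while your argument is self-contained, needing only the paper's own Lemma \ref{conv} and \eqref{e15c}, and is the only one of the two that satisfies hypothesis (d) as stated for all $m\ge2$. It is worth noting an alternative repair: the proof of Theorem \ref{the2}(ii) uses (d) only to pass from \eqref{e29} to \eqref{e30}, and when $t\tau=1$ the bound \eqref{e7} reads $\textup{diam}(\Omega)\le K\T(\Omega)^t$, which yields \eqref{e30} directly; so the borderline case $t=1/\tau$ would in fact suffice for the theorem — but your exponent-boosting trick is the cleaner fix, since it leaves the theorem's hypotheses untouched. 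Finally, a small point glossed over in both proofs: Theorem \ref{the2}(ii) produces isometries while the corollary asserts translates; since the limit is a ball and the Hausdorff distance is isometry-invariant, an isometric copy of $\Omega_k^*$ close to a ball gives a translate of $\Omega_k^*$ close to a re-centred ball of the same radius, so the reduction is immediate.
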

\begin{proof}
It follows by the isoperimetric inequality that if $\Pa(\Omega)=c$
then $|\Omega|<\infty$, and so the Dirichlet Laplacian acting in
$L^2(\Omega)$ has discrete spectrum. The scaling relation under
(b) holds with $\tau=m-1$. By \cite{GWW} we have that the diameter
bound under (d) holds with $t=m-1$, and reads
\begin{equation}\label{e37}
\textup{diam}(\Omega)\le
m^{2-m}\omega_{m-1}^{-1}\Pa(\Omega)^{m-1}|\Omega|^{2-m}.
\end{equation}
The unique set $D$ under \eqref{e6a} is the ball in $\R^m$ with
Lebesgue measure $1$.
\end{proof}
Note that the constant in \eqref{e37} is sharp for a sequence of
double sided cones with diameter increasing to infinity
\cite{GWW}.

Recall that the moment of inertia of an open set $\Omega$ in
$\R^m$ with respect to its centre of mass is defined by
\begin{equation}\label{e48}
\mathcal{J}(\Omega)= \frac{1}{2|\Omega|}\iint_{\Omega\times\Omega}dxdy|x-y|^2.
\end{equation}
\begin{corollary}\label{cor4} Let $\Omega_k^*$ be a minimizer of
\begin{equation*}%\label{e38}
I_k(c)=\inf \{\lambda_k(\Omega) :\Omega\ \textup{open, convex in}\
\R^m ,\mathcal J(\Omega) = c \},
\end{equation*}
Then there exists a sequence of translates of
$(\Omega_k^*)$, again denoted by $(\Omega_k^*)$ which converges in the Hausdorff metric to
the ball with moment of inertia $c$.
\end{corollary}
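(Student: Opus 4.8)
The plan is to show that the moment of inertia $\mathcal{J}$ satisfies hypotheses \textup{(a), (b), (c2)} and \textup{(d)}, and then to invoke Theorem \ref{the2}(ii). The starting point is the elementary identity, obtained by expanding $|x-y|^2$ in \eqref{e48}, that $\mathcal{J}(\Omega)=\int_{\Omega}|x-\bar{x}|^2\,dx$, where $\bar{x}$ is the centre of mass of $\Omega$; equivalently $\mathcal{J}(\Omega)=\min_{z\in\R^m}\int_{\Omega}|x-z|^2\,dx$, the minimum being attained at $z=\bar{x}$.

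First I would verify \textup{(a)}, \textup{(b)} and \textup{(c2)}. Invariance under isometries and non-negativity are immediate from this representation, and $\mathcal{J}(\Omega)=0$ forces $|\Omega|=0$. For monotonicity, if $\Omega_1\subset\Omega_2$ with centres of mass $\bar{x}_1,\bar{x}_2$, then $\mathcal{J}(\Omega_1)\le\int_{\Omega_1}|x-\bar{x}_2|^2\,dx\le\int_{\Omega_2}|x-\bar{x}_2|^2\,dx=\mathcal{J}(\Omega_2)$, the first inequality because $\bar{x}_1$ minimizes the second moment and the second because the integrand is non-negative. A change of variables in \eqref{e48} gives $\mathcal{J}(\alpha\Omega)=\alpha^{m+2}\mathcal{J}(\Omega)$, so \textup{(b)} holds with $\tau=m+2$. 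For \textup{(c2)}, placing the centre of mass at the origin reduces matters to minimizing $\int_{\Omega}|x|^2\,dx$ over sets of measure $1$; by the bathtub principle (or Schwarz symmetrization) the unique minimizer up to isometry is the centred ball $D$ of measure $1$, whose centroid is indeed the origin, so $D$ is the ball.

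The hard part will be \textup{(d)}: I must produce $K<\infty$ and $t>1/(m+2)$ with $\textup{diam}(\Omega)\le K\,\mathcal{J}(\Omega)^{t}|\Omega|^{(1-t(m+2))/m}$. I expect $t=\tfrac12$ to work, for which the required exponent on $|\Omega|$ is $-\tfrac12$, so that the claim reduces to $\mathcal{J}(\Omega)\ge c_m\,\textup{diam}(\Omega)^2|\Omega|$ for a dimensional constant $c_m>0$. The plan is to choose coordinates with $x_1$ along a diameter segment $[p,q]$, so that the one-dimensional slice function $f(t)=|\{x\in\Omega:x_1=t\}|_{m-1}$ is supported on an interval of length $d=\textup{diam}(\Omega)$ and, by the Brunn--Minkowski theorem, $f^{1/(m-1)}$ is concave there. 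Then $\mathcal{J}(\Omega)\ge\int_{\Omega}(x_1-\bar{x}_1)^2\,dx=\int(t-\bar{t})^2 f(t)\,dt$, and it remains to bound this one-dimensional central second moment below by $c_m d^2\int f=c_m d^2|\Omega|$. For this I would use concavity of $g:=f^{1/(m-1)}$, which lies above its chords, to show that the level set $\{g\ge (\max g)/2\}$ has length at least $d/2$; hence $\max f\le 2^m|\Omega|/d$, so the mass lying within distance $r_0=d/2^{m+2}$ of $\bar{t}$ is at most $|\Omega|/2$. Consequently at least half the mass lies at distance $\ge r_0$ from $\bar{t}$, and the second moment is at least $r_0^2|\Omega|/2=2^{-(2m+5)}d^2|\Omega|$. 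This establishes \textup{(d)} with $t=\tfrac12$, which satisfies $t>1/(m+2)$ for all $m\ge 2$, and with an explicit $K$.

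With \textup{(a), (b), (c2)} and \textup{(d)} verified, Theorem \ref{the2}(ii) applies and yields a sequence of isometries, hence translates (since $D$ is a ball, every isometric image of a homothety of $D$ is a translate of it), of the minimizers converging in the Hausdorff metric to $\bigl(c/\mathcal{J}(D)\bigr)^{1/(m+2)}D$. By the scaling \textup{(b)} this homothety of the unit-measure ball has moment of inertia exactly $c$, which is precisely the ball with moment of inertia $c$ asserted in the statement.
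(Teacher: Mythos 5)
Your proposal is correct, and its skeleton coincides with the paper's: verify that $\mathcal{J}$ satisfies (a), (b), (c2), (d) with $\tau=m+2$, apply Theorem \ref{the2}(ii), and observe that isometric images of a homothety of the ball are translates of it. The genuine difference is in the proof of the diameter bound (d), which is the substance of the paper's argument. Both proofs slice $\Omega$ perpendicular to a diameter and bound $\mathcal{J}(\Omega)$ from below by a one-dimensional second moment of the slice-area function $f$, but the paper does not invoke Brunn--Minkowski: it uses only the cone inclusion forced by convexity, namely that $\Omega$ contains the cone over the largest slice $\Omega_0$ with vertex at the far endpoint $f$ of the diameter, giving $|\Omega_{x_1}|_{m-1}\ge|\Omega_0|_{m-1}\left(1-x_1/|f|\right)^{m-1}$, and then evaluates the resulting double integral $\int_0^1\int_0^1(1-x_1)^{m-1}(1-y_1)^{m-1}(x_1-y_1)^2\,dx_1\,dy_1=2/(m(m+1)^2(m+2))$ exactly; this yields $t=1/2$ with $K=4\left(m(m+1)^2(m+2)\right)^{1/2}$, polynomial in $m$. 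Your route replaces the cone inclusion and the exact computation by the Brunn--Minkowski concavity of $f^{1/(m-1)}$, the level-set bound $\max f\le 2^m|\Omega|/\textup{diam}(\Omega)$, and a Chebyshev-type ``half the mass is far from the mean'' estimate; it is computation-free but produces $K=2^{(2m+5)/2}$, exponential in $m$ --- immaterial here, since any finite $K$ with $t=1/2>1/(m+2)$ suffices for Theorem \ref{the2}(ii). Note also that your verification of (a) is more careful than the paper's: monotonicity of $\mathcal{J}$ is not evident from \eqref{e48} because of the $1/|\Omega|$ normalization, and your characterization $\mathcal{J}(\Omega)=\min_{z\in\R^m}\int_\Omega|x-z|^2\,dx$ supplies exactly the justification the paper omits when it declares monotonicity clear. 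Your bathtub/symmetrization derivation of (c2) is the standard proof of the isoperimetric inequality \eqref{e39} that the paper cites, so there the two arguments agree.
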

\begin{proof}
From \eqref{e48} it is clear that the moment of inertia is
invariant under isometries and monotone on the open sets. Hence
(a) is satisfied. By \eqref{e48} we see that the scaling under (b)
holds with $\tau=m+2$. The isoperimetric inequality for the moment
of inertia states that
\begin{equation}\label{e39}
\mathcal{J}(\Omega)\ge
\frac{m}{m+2}\omega_m^{-2/m}|\Omega|^{(m+2)/m},
\end{equation}
with equality if and only if $\Omega$ is a ball (up to sets of
measure $0$). The isoperimetric inequality \eqref{e39} implies
that (c2) holds for the ball with Lebesgue measure $1$. Below we
show that the diameter bound under (d) holds with $t=\frac{1}{2}$,
and reads
\begin{equation}\label{e41}
\textup{diam}(\Omega)\le K
\mathcal{J}(\Omega)^{1/2}|\Omega|^{-1/2},
\end{equation}
where
\begin{equation}\label{e42}
K=4\left(m(m+1)^2(m+2)\right)^{1/2}.
\end{equation}
Note that by \eqref{e15a} an open, convex set with finite Lebesgue
measure is bounded. To prove \eqref{e41} we let $d$ and $f$ be two
points of the boundary of $\Omega$ such that
$|d-f|=\textup{diam}(\Omega)$. Let $p$ be any point of the
straight line segment $[d,f]$, and let $\Pi_p$ be the
$(m-1)$-dimensional plane perpendicular to $[d,f]$. We denote
$\Omega_p=\Pi_p\cap\Omega$, and denote its $(m-1)$-dimensional
Lebesgue measure by $|\Omega_p|_{m-1}$. Then $p\mapsto
|\Omega_p|_{m-1}$ is continuous on the compact line segment
$[d,f]$, and attains its maximum at say $0$. We choose the
positive $x_1$ axis along $[0,f]$, and put $x=(x_1,x')$. Without
loss of generality we may assume that $|f|\ge
\frac{1}{2}\textup{diam}(\Omega)$. By \eqref{e48} we have that
\begin{align}\label{e43}
\mathcal{J}(\Omega)&\ge\frac{1}{2|\Omega|}\int_0^{|f|}dx_1\int_0^{|f|}dy_1\int_{\Omega_{x_1}}dx'\int_{\Omega_{y_1}}dy'\left((x_1-y_1)^2+|x'-y'|^2\right)
\nonumber \\
&\ge\frac{1}{2|\Omega|}\int_0^{|f|}dx_1\int_0^{|f|}dy_1\int_{\Omega_{x_1}}dx'\int_{\Omega_{y_1}}dy'(x_1-y_1)^2
\nonumber \\
&=\frac{1}{2|\Omega|}
\int_0^{|f|}dx_1\int_0^{|f|}dy_1|\Omega_{x_1}|_{m-1}|\Omega_{y_1}|_{m-1}(x_1-y_1)^2.
\end{align}
By convexity we have that $\Omega$ contains the cone with base
$\Omega_{x_1}$ and vertex $f$. It follows by monotonicity and
scaling that
\begin{equation}\label{e44}
|\Omega_{x_1}|_{m-1}\ge|\Omega_{0}|_{m-1}\left(1-\frac{x_1}{|f|}\right)^{m-1}.
\end{equation}
By \eqref{e43} and \eqref{e44} we find that
\begin{align*}%\label{e45}
\mathcal{J}(\Omega)&\ge\frac{|\Omega_{0}|_{m-1}^2}{2|\Omega|}\int_0^{|f|}dx_1\int_0^{|f|}dy_1\left(1-\frac{x_1}{|f|}\right)^{m-1}
\left(1-\frac{y_1}{|f|}\right)^{m-1}(x_1-y_1)^2
\nonumber \\
&=\frac{|\Omega_{0}|_{m-1}^2|f|^4}{2|\Omega|}\int_0^1dx_1\int_0^1dy_1(1-x_1)^{m-1}
(1-y_1)^{m-1}(x_1-y_1)^2\nonumber\\
&=\frac{|\Omega_{0}|_{m-1}^2|f|^4}{m(m+1)^2(m+2)|\Omega|}\nonumber\\
&\ge\frac{|\Omega_{0}|_{m-1}^2\textup{diam}(\Omega)^4}{16m(m+1)^2(m+2)|\Omega|}\nonumber\\
&\ge\frac{\textup{diam}(\Omega)^2|\Omega|}{16m(m+1)^2(m+2)},
\end{align*}
where we have used that $|\Omega|\le
|\Omega_0|_{m-1}\textup{diam}(\Omega)$. This implies \eqref{e41},
\eqref{e42}.
\end{proof}
\begin{corollary}\label{cor5}
If $\T$ satisfies \textup{(a),(b) and (c1)} then
\begin{equation}\label{e52}
H_k=\inf \{\lambda_k(\Omega)+\T(\Omega) :\Omega\ \textup{open, convex in}\ \R^m \},
\end{equation}
has a minimizer which is up to isometries a homothety of the minimizer of \eqref{e5} with $c=1$.
\end{corollary}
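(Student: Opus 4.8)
The plan is to reduce the unconstrained minimization \eqref{e52} to the already-solved constrained problem \eqref{e5} with $c=1$ by exploiting the two competing scaling laws obeyed by the two summands. Under a homothety $\Omega\mapsto\alpha\Omega$ the Dirichlet eigenvalue scales as $\lambda_k(\alpha\Omega)=\alpha^{-2}\lambda_k(\Omega)$, while hypothesis (b) gives $\T(\alpha\Omega)=\alpha^{\tau}\T(\Omega)$. Since these exponents have opposite sign, each fixed convex shape has a genuinely optimal size, and the minimization over $\Omega$ splits into an optimization over ``shape'' and an optimization over ``scale''.

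First I would restrict attention to open convex $\Omega$ with $0<\T(\Omega)<\infty$ and $\lambda_k(\Omega)<\infty$, since any $\Omega$ for which a summand is infinite cannot compete with the finite value furnished below. For such $\Omega$, writing $a=\lambda_k(\Omega)$ and $b=\T(\Omega)$, I would minimize $g(\alpha)=a\alpha^{-2}+b\alpha^{\tau}$ over $\alpha>0$ by elementary calculus: the stationarity condition $g'(\alpha)=0$ gives $\alpha_*^{\tau+2}=2a/(\tau b)$, and substituting back yields $\min_{\alpha>0}g(\alpha)=C_\tau\,a^{\tau/(\tau+2)}b^{2/(\tau+2)}$ with $C_\tau=\frac{\tau+2}{\tau}(\tau/2)^{2/(\tau+2)}$. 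Thus, over the homotheties of a fixed shape, the infimum of $\lambda_k+\T$ is a fixed positive multiple of the product $\lambda_k(\Omega)^{\tau/(\tau+2)}\T(\Omega)^{2/(\tau+2)}$, which is invariant under homothety, the $\alpha$-powers cancelling by the two scaling laws.

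It then remains to minimize this scale-invariant product over convex shapes. Normalizing by $\T(\Omega)=1$ (possible for every admissible $\Omega$ by replacing it with $\alpha\Omega$ for $\alpha=\T(\Omega)^{-1/\tau}$ and using (a), (b)) reduces this to minimizing $\lambda_k(\Omega)$ subject to $\T(\Omega)=1$, that is, to the value $I_k(1)$. By Theorem~\ref{the2}(i) this infimum is attained at some open convex $\Omega_1^*$ with $\T(\Omega_1^*)=1$ and $\lambda_k(\Omega_1^*)=I_k(1)$. Combining the two steps gives $H_k=C_\tau I_k(1)^{\tau/(\tau+2)}$, attained at $\alpha_*\Omega_1^*$ with $\alpha_*=(2I_k(1)/\tau)^{1/(\tau+2)}$, which is the asserted homothety of a minimizer of \eqref{e5} for $c=1$.

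I do not expect a serious obstacle: all of the real content, namely the existence and convexity of a minimizer of the constrained problem via the Hausdorff-compactness argument, is imported from Theorem~\ref{the2}(i), and everything else is the scaling bookkeeping above. The only points requiring care are (i) confirming that the double infimum over shape and scale genuinely equals the infimum over all convex sets, which follows because every admissible $\Omega$ is a homothety of a $\T$-normalized one, and (ii) ensuring $\lambda_k(\Omega_1^*)>0$ so that $\alpha_*$ is well-defined and positive, which holds since the Dirichlet eigenvalues of a bounded set are strictly positive.
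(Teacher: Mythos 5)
Your proposal is correct and is essentially the paper's own argument: the paper proves Corollary \ref{cor5} via Proposition \ref{prop1} in the Appendix, which performs the same shape/scale decomposition with the optimal scale $t(\Omega)=\left(2\lambda_k(\Omega)/(\tau\T(\Omega))\right)^{1/(\tau+2)}$ and constant $n(\tau)=(\tau/2)^{2/(\tau+2)}+(2/\tau)^{\tau/(\tau+2)}$, which is exactly your $C_\tau$, and then imports existence of the constrained minimizer from Theorem \ref{the2}(i), just as you do. The only cosmetic difference is that the paper routes the reduction through the intermediate scale-invariant functional $\lambda_k(\Omega)\T(\Omega)^{2/\tau}$ and proves an ``if and only if'' equivalence, whereas you prove only the direction actually needed for the corollary.
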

The proof of this corollary is straightforward and is deferred to
the Appendix.

\section{Proof of Theorem \ref{the3} \label{sec3}}

In this section we will prove Theorem \ref{the3}.

\noindent {\begin{proof}(i) It is evident from the definition of
$J_k(c)$ that $c\mapsto J_k(c)$ is monotonically decreasing. To
prove continuity we let $c_2>0$ and we let $\epsilon>0$ be
arbitrary. Then there exists an open set $\Omega_{\epsilon}$ such
that $\lambda_k(\Omega_{\epsilon})\le (1+\epsilon)J_k(c_2)$, and
which satisfies the constraints $|\Omega_{\epsilon}|\le 1$ and
$\Pa(\Omega_{\epsilon})\le c_2$. Let $c_1<c_2$. Then the open set
$\left(\frac{c_1}{c_2}\right)^{1/(m-1)}\Omega_{\epsilon}$
satisfies
\begin{equation*}%\label{e53}
\left|(\frac{c_1}{c_2})^{1/(m-1)}\Omega_{\epsilon}\right|\le
\left(\frac{c_1}{c_2}\right)^{m/(m-1)}<1,
\end{equation*}
and
\begin{equation*}%\label{e54}
\Pa\left((\frac{c_1}{c_2})^{1/(m-1)}\Omega_{\epsilon}\right)\le
c_1.
\end{equation*}
Hence
\begin{equation*}%\label{e55}
J_k(c_1)\le\lambda_k\left((\frac{c_1}{c_2})^{1/(m-1)}\Omega_{\epsilon}\right)\le(1+\epsilon)\left(\frac{c_2}{c_1}\right)^{2/(m-1)}J_k(c_2).
\end{equation*}
This implies \eqref{b4} since $\epsilon>0$ was arbitrary. To prove
continuity we have for $c_1<c_2$ by \eqref{b4} and the
monotonicity of $c\mapsto J_k(c)$ that
\begin{equation*}%\label{e56}
0\le J_k(c_1)-J_k(c_2)\le
J_k(c_2)\left((\frac{c_2}{c_1})^{2/(m-1)}-1\right).
\end{equation*}
This implies left-continuity at $c_2$. We have by \eqref{b4} and
monotonicity of $c\mapsto J_k(c)$ that
\begin{equation*}%\label{e57}
0\le J_k(c_1)-J_k(c_2)\le
J_k(c_1)\left(1-(\frac{c_1}{c_2})^{2/(m-1)}\right).
\end{equation*}
This implies right-continuity at $c_1$.

\noindent(ii) Suppose that $c>\mu_k$. By the definition of $\mu_k$
there exists $\Omega^*\in \mathfrak{M}_k$ with $\Pa(\Omega^*)\le
c$. Hence $J_k(c)\le \lambda_k(\Omega^*)=M_k(1)$. Since trivially
$J_k(c)\ge M_k(1)$ we have that $J_k(c)=M_k(1)$ for $c>\mu_k$, and
that $\Omega^*$ is a minimizer of \eqref{b1}. Finally
$J_k(\mu_k)=M_k(1)$ by the continuity of $c\mapsto J_k(c)$ proved
above.

\noindent(iii)
Suppose that $c<\pi_k^{-(m-1)/m}$. Let $\mathfrak{P}_k(c)$ denote
the collection of minimizers of $P_k(c)$, and put
\begin{equation*}%\label{e58}
\pi_k(c)=\inf\{|\Omega|:\ \Omega\in \mathfrak{P}_k(c)\}.
\end{equation*}
By scaling we have that
\begin{align*}%\label{e59}
\pi_k(c)&=\inf\{|c^{1/(m-1)}\Omega|,\ c^{1/(m-1)}\Omega\in
\mathfrak{P}_k(c)\}\nonumber \\ &= c^{m/(m-1)}\inf\{|\Omega|:\
\Omega\in \mathfrak{P}_k(1)\}\nonumber \\ &=c^{m/(m-1)}\pi_k.
\end{align*}
First suppose that $\pi_k(c)<1$. Then there exists a minimizer
$\tilde{\Omega}\in\mathfrak{P}_k(c)$ with $|\tilde{\Omega}|\le1$.
Hence $J_k(c)\le \lambda_k(\tilde{\Omega})\le P_k(c)$. On the
other hand, $J_k(c)\ge P_k(c)$. It follows that $J_k(c)= P_k(c)$,
$\Omega_*:=c^{-1/(m-1)}\tilde{\Omega}\in\mathfrak{P}_k$, and that
$c^{1/(m-1)}\Omega_*$ is a minimizer of \eqref{b1}. We have that
$J_k(c)= P_k(c)$ if $\pi_k(c)=1$ by the continuity of $c\mapsto
J_k(c)$ proved above.

\noindent(iv) Let $\Omega^* \in \mathfrak{M}_k$. Then
$|\Omega^*|=1$, and $\Pa(\Omega^*)\ge m\omega_m^{1/m}$ by the
isoperimetric inequality.

\noindent(v) Let $\Omega_*\in \mathfrak{P}_k$.
Then $\Pa(\Omega)=1$, and $|\Omega|\le
m^{-m/(m-1)}\omega_m^{-1/(m-1)}$ by the isoperimetric inequality.

\noindent(vi) By the Li-Yau inequality \eqref{e19},
\begin{equation}\label{e60}
P_k(1)=\lambda_k(\Omega^*)\ge \frac{mC_m}{m+2}
\left(\frac{k}{|\Omega^*|}\right)^{2/m}.
\end{equation}
For the cube $Q_a \in \R^m$ with $|Q_a|=a^m$ and with $\Pa(Q_a)=1$
we have that
\begin{equation}\label{e61}
a=(2m)^{-1/(m-1)}.
\end{equation}
We have that $\lambda_k(Q_a)\ge \lambda_1(Q_a)=m\pi^2/a^2$. Since
for $x> 1$ we have that $\max\{n\in\N:n<x\}=\lfloor x\rfloor\ge
x/2$, we conclude that
\begin{align*}%\label{e62}
k&=\sharp\{(k_1,\cdots,k_m)\in\N^m:\pi^2(k_1^2+\cdots
+k_m^2)\le\lambda_k(Q_a)a^2\}\nonumber \\ &\ge \left(\sharp\{k\in
\N:m\pi^2k^2<\lambda_k(Q_a)a^2\}\right)^m\nonumber \\ &\ge \left
\lfloor\frac{a\lambda_k(Q_a)^{1/2}}{\pi m^{1/2}} \right\rfloor^m\nonumber \\
&\ge\left(\frac{a\lambda_k(Q_a)^{1/2}}{2\pi m^{1/2}} \right)^m.
\end{align*}
It follows that
\begin{equation}\label{e63}
\lambda_k(Q_a)\le \frac{4\pi^2m}{a^2}k^{2/m}.
\end{equation}
We also have that $P_k(1)\le \lambda_k(Q_a)$. Putting this
together with \eqref{e60}, \eqref{e61} and \eqref{e63} we conclude
that
\begin{equation*}%\label{e64}
|\Omega^*|\ge (2m)^{-m/(m-1)}(m+2)^{-m/2}\omega_m^{-1}.
\end{equation*}
\end{proof}

\section{Appendix\label{sec4}}
In this appendix we prove the following.
\begin{proposition}\label{prop1}Let
\begin{equation}\label{a1}
L_k=\inf \{\lambda_k(\Omega) :\Omega \ \textup{open in}\ \R^m ,\
\T(\Omega) \le 1 \},
\end{equation}
and
\begin{equation}\label{a2}
T_k=\inf \{\lambda_k(\Omega)+\T(\Omega) :\Omega\ \textup{open in}\
\R^m \}.
\end{equation}
Suppose $\T$ satisfies the hypotheses \textup{(a), (b)} and
\textup{(c1)}. Then the variational problem defined under
\eqref{a1} has a minimizer if and only if the variational problem
under \eqref{a2} has a minimizer. Moreover these minimizers are
hometheties of one another.
\end{proposition}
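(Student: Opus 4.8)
The plan is to exploit the two available scaling laws: the Dirichlet eigenvalue obeys $\lambda_k(\alpha\Omega)=\alpha^{-2}\lambda_k(\Omega)$ for every $\alpha>0$, while hypothesis (b) gives $\T(\alpha\Omega)=\alpha^{\tau}\T(\Omega)$. Both \eqref{a1} and \eqref{a2} range over a class of sets closed under homothety, so I would reduce each to the same scale-invariant minimization and then transport minimizers between them by an explicit homothety. As a first step, for a fixed nonempty open $\Omega$ with $a:=\lambda_k(\Omega)$ and $b:=\T(\Omega)$ (and $b>0$ by (a)(iii)), the function $\alpha\mapsto a\alpha^{-2}+b\alpha^{\tau}$ is strictly convex on $(0,\infty)$ and tends to $+\infty$ at both ends, so it attains its minimum at $\alpha_*=(2a/(\tau b))^{1/(\tau+2)}$, with minimal value $C\,a^{\tau/(\tau+2)}b^{2/(\tau+2)}$ where $C=\frac{\tau+2}{\tau}(\tau/2)^{2/(\tau+2)}$. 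Thus minimizing $\lambda_k+\T$ over a single homothety orbit produces the scale-invariant quantity $F(\Omega):=\lambda_k(\Omega)^{\tau/(\tau+2)}\T(\Omega)^{2/(\tau+2)}$, where scale invariance is immediate from the two scaling laws since the exponents cancel.

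Grouping all open sets into homothety orbits then gives $T_k=C\inf_{\Omega}F(\Omega)$. To identify this infimum with the first problem, I would use the scale invariance of $F$ to rescale every $\Omega$ to $\T=1$, so that $\inf_{\Omega}F=\inf\{\lambda_k(\Omega)^{\tau/(\tau+2)}:\T(\Omega)=1\}=(L_k)^{\tau/(\tau+2)}$; here I also use that in \eqref{a1} the constraint $\T\le1$ is active at the infimum, because scaling a set with $\T<1$ up to $\T=1$ multiplies the eigenvalue by a factor $\le1$. This yields the identity $T_k=C\,(L_k)^{\tau/(\tau+2)}$, which holds whether or not either infimum is attained.

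It then remains to read off the equivalence of attainment together with the homothety relation. If $\Omega^*$ solves \eqref{a1}, then necessarily $\T(\Omega^*)=1$ (otherwise scaling up strictly decreases $\lambda_k$), so $F(\Omega^*)=\inf F$ and the homothety $\alpha_*\Omega^*$ realizes $T_k$. Conversely, if $\tilde\Omega$ solves \eqref{a2}, then $\alpha=1$ must minimize over its own orbit $\{\alpha\tilde\Omega:\alpha>0\}$, whence $\lambda_k(\tilde\Omega)+\T(\tilde\Omega)=C\,F(\tilde\Omega)$; comparing with $T_k=C(L_k)^{\tau/(\tau+2)}$ forces $F(\tilde\Omega)=\inf F$, and rescaling $\tilde\Omega$ to $\T=1$ produces a solution of \eqref{a1}. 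In each direction the two minimizers differ by the explicit factor $\alpha_*$, which is the asserted homothety. I expect the only delicate point to be this converse step: one must know that a minimizer of the unconstrained sum sits at the scale-optimal point of its own orbit, which is precisely what allows the reduction to be inverted — but this is automatic, since $\tilde\Omega$ minimizes over all open sets and in particular over its homothety orbit.
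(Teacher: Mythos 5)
Your proposal is correct and follows essentially the same route as the paper: the paper introduces the scale-invariant quantity $N_k=\inf\{\lambda_k(\Omega)\T(\Omega)^{2/\tau}\}$, proves $L_k=N_k$ and $T_k=n(\tau)N_k^{\tau/(\tau+2)}$ with $n(\tau)=\left(\frac{\tau}{2}\right)^{2/(\tau+2)}+\left(\frac{2}{\tau}\right)^{\tau/(\tau+2)}$ (which equals your constant $C=\frac{\tau+2}{\tau}(\tau/2)^{2/(\tau+2)}$), and transports minimizers via the same optimal homothety $t(\Omega)=\bigl(2\lambda_k(\Omega)/(\tau\T(\Omega))\bigr)^{1/(\tau+2)}$, so your orbit-by-orbit reduction to $F(\Omega)=\lambda_k(\Omega)^{\tau/(\tau+2)}\T(\Omega)^{2/(\tau+2)}$ is just a monotone reparametrization of the paper's intermediate problem. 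The only cosmetic slip is the claim that $\alpha\mapsto a\alpha^{-2}+b\alpha^{\tau}$ is strictly convex, which fails for $\tau<1$; this is immaterial, since coercivity at both ends together with the uniqueness of the critical point already gives the minimum value you compute.
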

\begin{proof}
It is convenient to define
\begin{equation}\label{a3}
N_k=\inf \{\lambda_k(\Omega)\mathcal{T}(\Omega)^{2/{\tau}}
:\Omega\ \textup{open in}\ \R^m ,\ \T(\Omega)<\infty \}.
\end{equation}
First we show that $L_k=N_k$. We have by scaling of the Dirichlet
eigenvalues  that
\begin{align}\label{a4} L_k=&
 \inf \{\lambda_k(t\Omega) :\Omega \ \textup{open in}\ \R^m ,\
t>0,\ \T(t\Omega) \le 1 \}\nonumber \\
=&\inf\{t^{-2}\lambda_k(\Omega):\Omega \ \textup{open in}\ \R^m ,\
t>0,\ t^{\tau}\T(\Omega)\le 1\}\nonumber \\
\ge &\inf\{\T(\Omega)^{2/{\tau}}\lambda_k(\Omega):\Omega \
\textup{open in}\ \R^m ,\ t>0,\ t^{\tau}\T(\Omega)\le 1\}\nonumber
\\
\ge&\inf\{\T(\Omega)^{2/{\tau}}\lambda_k(\Omega):\Omega \
\textup{open in}\ \R^m, \T(\Omega)<\infty \}\nonumber \\ =&N_k.
\end{align}
We obtain the reverse inequality by choosing
$t=\T(\Omega)^{-1/{\tau}}$ in the second line of \eqref{a4}. If
$\Omega^*$ is a minimizer of \eqref{a1} then $\T(\Omega^*)=1$.
Hence
$\T(\Omega^*)^{2/{\tau}}\lambda_k(\Omega^*)=\lambda_k(\Omega^*)=L_k=N_k$,
and the infimum in \eqref{a3} is attained for $\Omega^*$.
Conversely if $\Omega^*$ is a minimizer of \eqref{a3} then we
choose $\alpha>0$ such that $\T(\alpha\Omega^*)=1$. Hence
$\alpha=\T(\Omega^*)^{-1/{\tau}}$. So $\alpha\Omega^*$ satisfies
the constraint in \eqref{a1}, and
$\lambda_k(\alpha\Omega^*)=\alpha^{-2}\lambda_k(\Omega^*)=\T(\Omega^*)^{2/{\tau}}\lambda_k(\Omega^*)=N_k=L_k$.
Hence the infimum in \eqref{a1} is attained by a homothety of
$\Omega^*$. We conclude that \eqref{a1} has a minimizer if and
only if \eqref{a3} has a minimizer.

Next we show that the variational problem under \eqref{a2} has a
minimizer if and only if the variational problem under \eqref{a3}
has a minimizer. We note that
\begin{align*}%\label{a5}
T_k=&\inf \{\lambda_k(\Omega)+\T(\Omega) :\Omega\ \textup{open
in}\ \R^m \}\nonumber \\  =&\inf \{\lambda_k(t\Omega)+\T(t\Omega)
:\Omega\ \textup{open in}\ \R^m  ,\ t>0\}\nonumber \\  =&\inf
\{t^{-2}\lambda_k(\Omega)+t^{\tau}\T(\Omega) :\Omega\ \textup{open
in}\ \R^m  ,\ t>0\ , \T(\Omega)<\infty \}.
\end{align*}
The infimum over $t>0$ is attained for
\begin{equation}\label{a6}
t(\Omega)=\left(\frac{2\lambda_k(\Omega)}{\tau\T(\Omega)}\right)^{1/(\tau+2)}.
\end{equation}
Hence
\begin{align}\label{a7}
T_k=n(\tau)\inf\{\lambda_k(\Omega)^{\tau/(\tau+2)}\T(\Omega)^{2/(\tau+2)}:\Omega\
\textup{open in}\ \R^m \} =n(\tau)N_k^{\tau/(\tau +2)},
\end{align}
where
\begin{equation*}%\label{a71}
n(\tau)=\left(\frac{\tau}{2}\right)^{2/(\tau+2)}+\left(\frac{2}{\tau}\right)^{\tau/(\tau
+2)}.
\end{equation*}
If $\Omega^*$ is a minimizer of \eqref{a3} then by \eqref{a2},
\eqref{a6} and \eqref{a7},
\begin{align*}%\label{a8}
T_k &\le
\lambda_k(t(\Omega^*)\Omega^*)+\T(t(\Omega^*)\Omega^*)\nonumber \\
&=t(\Omega^*)^{-2}\lambda_k(\Omega^*)+t(\Omega^*)^{\tau}\T(\Omega^*)\nonumber
\\ &=n(\tau)N_k^{\tau/(\tau +2)}\nonumber \\ &=T_k.
\end{align*}
Hence $t(\Omega^*)\Omega^*$ is a minimizer of \eqref{a2}. If
$\Omega^*$ is a minimizer of \eqref{a2} then by \eqref{a7} we have
that
\begin{align*}%\label{a9}
N_k^{\tau/(\tau+2)}&\le(\lambda_k(\Omega^*)\mathcal{T}(\Omega^*)^{2/\tau})^{\tau/(\tau+2)}\nonumber
\\ &=n(\tau)^{-1}\inf
\{t^{-2}\lambda_k(\Omega^*)+t^{\tau}\T(\Omega^*):\ t>0\}\nonumber
\\ &=n(\tau)^{-1}\inf \{\lambda_k(t\Omega^*)+\T(t\Omega^*):\
t>0\}\nonumber \\
&\le n(\tau)^{-1}(\lambda_k(\Omega^*)+\T(\Omega^*))\nonumber \\
&=n(\tau)^{-1}T_k\nonumber \\ &=N_k^{\tau/(\tau+2)}.
\end{align*}
Hence $\Omega^*$ is a minimizer of \eqref{a3}. We conclude that
\eqref{a2} has a minimizer if and only if \eqref{a3} has a
minimizer. This also concludes the proof of the proposition.
\end{proof}
 \noindent {\it Proof of Corollary
\ref{cor5}.} We remark that Proposition \ref{prop1} holds if the
variational expressions under \eqref{a1} and \eqref{a2} have an
additional convexity constraint. If $\T$ satisfies \textup{(a),
(b) and (c1)} then \eqref{e5} has a minimizer. By the previous
remark we have that Proposition \ref{prop1} implies that the
variational expression under \eqref{e52} has a minimizer which is
a homothety of the one corresponding to \eqref{e5}.\hspace*{\fill
}$\square $

 \noindent
% \textbf{Acknowledgment}.

\end{document}